\DeclareMathAlphabet{\mathpzc}{OT1}{pzc}{m}{it}
\newtheorem{thm}{Theorem}[section]
\newtheorem{lem}[thm]{Lemma}
\newtheorem{prop}[thm]{Proposition} 
\newtheorem{cor}[thm]{Corollary}
\newtheorem{rem}[thm]{Remark}
\newcommand{\m}{\mathpzc{m}}
\newcommand{\bZ}{\mathbb Z}
\newcommand{\A}{\mathbb A}
\newcommand{\V}{\mathbb V}
\newcommand{\G}{\mathbb G}
\newcommand{\td}{\operatorname{tr.deg}}
\newcommand{\w}{\operatorname{wt}}
\newcommand{\gr}{\operatorname{gr}}
\title{On the Cancellation Problem for the Affine Space $\A^3$ \\
in characteristic $p$}
\author{{Neena Gupta\footnote{\it
{Current address: 
Stat-Math Unit, Indian Statistical Institute, 203 B.T. Road, Kolkata 700108, India.
e-mail addresses: neenag@isical.ac.in, rnanina@gmail.com}}}\\
{\small{\it School of Mathematics, Tata Institute of Fundamental Research}} \\
{\small{\it Dr. Homi Bhabha Road, Colaba, Mumbai 400005, India}}\\
{\small{\it e-mail : neena@math.tifr.res.in}}}
\begin{document}

\date{}
\maketitle

\begin{abstract}
We show that the Cancellation Conjecture 
does not hold
for the affine space $\A^3_k$ over any field $k$ of positive characteristic.
We prove that an example of T. Asanuma provides a three-dimensional $k$-algebra $A$
for which $A$ is not isomorphic to $k[X_1,X_2,X_3]$ although $A[T]$ is isomorphic to $k[X_1, X_2, X_3, X_4]$.

\medskip

\noindent
{\small {\bf Keywords}. Polynomial Algebra, Cancellation Problem, $\G_a$-action, Graded Ring.}\\
{\small {\bf AMS Subject classifications (2010)}. Primary: 14R10; Secondary: 13B25, 13A50, 13A02}.
\end{abstract}

\section{Introduction}
Let $k$ be an algebraically closed field.
The long-standing Cancellation Problem for Affine Spaces 
(also known as Zariski Problem) asks: 
if $\V$ is an affine $k$-variety such that $\V \times \A^1_k \cong \A^{n+1}_k$,
does it follow that $\V \cong \A_k^n$? Equivalently, 
if $A$ is an affine $k$-algebra
such that $A[X]$ is isomorphic to the polynomial ring $k[X_1, \dots, X_{n+1}]$,
does it follow that $A$ is isomorphic to $k[X_1, \dots, X_n]$?

For $n=1$, a positive solution to the problem was given by 
Shreeram S. Abhyankar, Paul Eakin and William J. Heinzer
(\cite{AEH}). For $n=2$, a positive solution to the problem was given by 
Takao Fujita, Masayoshi Miyanishi and Tohru Sugie
(\cite{F}, \cite{MS}) in characteristic zero and 
Peter Russell (\cite{R}) in positive characteristic;  a simplified algebraic proof was given by 
Anthony J. Crachiola and Leonid G. Makar-Limanov
in \cite{CM}. The problem remained open for $n>2$.

In this paper, we shall show that for $n=3$, a threefold investigated by Teruo Asanuma in
\cite{A} and \cite{A2} gives a negative solution to the Cancellation Problem in positive
characteristic (Corollary \ref{cor}). For convenience, we shall use the notation $R^{[n]}$ for a polynomial
ring in $n$ variables over a ring $R$.

In \cite{A}, Asanuma has proved the following theorem (cf. \cite[Theorem 5.1, Corollary 5.3]{A}, \cite[Theorem 1.1]{A2}).

\begin{thm}\label{Ae}
 Let $k$ be a field of characteristic $p(>0)$ and
 $$ A= k[X,Y,Z,T]/(X^m Y + Z^{p^e} + T+ T^{sp})$$
 where $m, e, s$ are positive integers such that $p^e \nmid sp$
 and $sp \nmid p^e$. Let $x$ denote the image of $X$ in $A$. 
 Then $A$ satisfies the following two properties:
 \begin{enumerate}
  \item $A^{[1]} \cong_{k[x]} k[x]^{[3]} \cong_k k^{[4]}$.
  \item $A \ncong_{k[x]} k[x]^{[2]}$.
 \end{enumerate}
\end{thm}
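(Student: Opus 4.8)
The plan is to treat assertions (1) and (2) separately, exploiting throughout the characteristic-$p$ identities $p^{e}Z^{p^{e}-1}=0$ and $sp\,T^{sp-1}=0$; note first that $p^{e}\nmid sp$ already forces $e\ge 2$. Put $R:=k[x]$. As $R$ is a PID and $A$ a domain with $x\ne 0$, $A$ is finitely generated and flat over $R$, and inverting $x$ (solving $Y=-x^{-m}(Z^{p^{e}}+T+T^{sp})$) gives $A[1/x]\cong_{R[1/x]}R[1/x]^{[2]}$ with coordinates $Z,T$. Since $f:=x^{m}Y+Z^{p^{e}}+T+T^{sp}$ is monic of degree $p^{e}$ in $Z$, $A=\bigoplus_{i=0}^{p^{e}-1}R[Y,T]\,Z^{i}$ is $R[Y,T]$-free, and the $R$-derivation $\partial:=\partial_{Z}$ (with $\partial x=\partial Y=\partial T=0$, $\partial Z=1$) descends to $A$ because $\partial f=p^{e}Z^{p^{e}-1}=0$, and is locally nilpotent there. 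This $\partial$ is the main concrete handle: $\ker\partial=\bigoplus_{p\mid i}R[Y,T]Z^{i}\cong R[Y,T,V]/(V^{p^{e-1}}+x^{m}Y+T+T^{sp})$ with $V=Z^{p}$ (``$A$ with $e$ lowered by one''), so iterating produces a tower $A=A_{0}\supset A_{1}\supset\cdots\supset A_{e}=R^{[2]}=k[x,Y,T]$ in which each $A_{j}$ is the kernel of an $R$-LND of $A_{j-1}$ with a slice.

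For (1) I would first check that $A$ is an $\A^{2}$-fibration over $R$. Over $(x-a)$ with $a\ne0$ this is immediate (solve for $Y$); over $(x)$ one has $A/xA=k[Y]\otimes_{k}C$ with $C:=k[Z,T]/(Z^{p^{e}}+T+T^{sp})$, so the crux is $C\cong k^{[1]}$. For this, note $\mathrm{Frac}(C)=k(w)$ with $w^{p^{e}}=T$ and $Z=\pm(w+w^{sp})$; then, writing $b=w+w^{sp}$, the telescoping identities $b^{p^{e-1}}=w^{p^{e-1}}+(w^{p^{e}})^{s}$, $b^{p^{e-2}}=w^{p^{e-2}}+(w^{p^{e-1}})^{s}$, and so on, exhibit $w^{p^{e-1}},w^{p^{e-2}},\dots,w$ successively as elements of $C$, whence $C=k[w]\cong k^{[1]}$. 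Thus $A$ is an $\A^{2}$-fibration over the PID $R$, trivial away from $(x)$; by the structure theory of affine fibrations (Bass--Connell--Wright, and Asanuma's work on polynomial fibre rings) such an $A$ is stably trivial over $R$, and a closer count---and in fact $Z^{p^{e}}+T+T^{sp}$ can be shown to be a coordinate of $R[Z,T,W]=R^{[3]}$---gives $A^{[1]}\cong_{R}R^{[3]}\cong_{k}k^{[4]}$.

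For (2), suppose $A\cong_{R}R^{[2]}$. One cannot refute this by reducing modulo $x$ (by (1) the fibre over $(x)$ genuinely is $\A^{2}$), nor by the naive Makar--Limanov invariant (in characteristic $p$ every derivation kills $p$-th powers, so it is useless; note in particular that $\partial=\partial_{Z}$, an $R$-LND with $\partial Z=1$, does \emph{not} integrate to a $\G_{a}$-action, since $Z\mapsto Z+u$ changes $f$ by $u^{p^{e}}$, and the slices in the tower above do not present the $A_{j-1}$ as polynomial rings over the $A_{j}$). Instead I would localise and complete at $(x)$: the hypothesis gives $\widehat A:=A\otimes_{R}k[[x]]\cong k[[x]]^{[2]}$, where $\widehat A$ is an $\A^{2}$-fibration over the complete DVR $k[[x]]$ with \emph{irreducible} special fibre $\A^{1}_{Y}\times C\cong\A^{2}$. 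The entire content of (2) is that this fibration is \emph{not} trivial---precisely where Sathaye's theorem (triviality of $\A^{2}$-fibrations over DVRs) fails in characteristic $p$. Tracing the obstruction down the tower to the bottom stage $k[[x]][Y,T,w]/(w^{p}+x^{m}Y+T+T^{sp})$, triviality of $\widehat A$ would ultimately force the surface $R[Y,T]/(x^{m}Y+T+T^{sp})$ to be $R$-isomorphic to $R^{[1]}$---impossible, since modulo $x$ it becomes $k[Y,T]/(T+T^{sp})$, which is disconnected ($T+T^{sp}$ being separable of degree $sp\ge2$).

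The step I expect to be genuinely hard is the passage just invoked in (2): showing that an $R$-isomorphism $A\cong_{R}R^{[2]}$ propagates rigidity all the way down the tower---equivalently, producing an honest invariant distinguishing the $\A^{2}$-fibration $\widehat A$ over $k[[x]]$ from the trivial one. This requires a cancellation/rigidity theorem over the base $R$ (or a reduction to $\A^{2}$-cancellation over $k$ together with careful control of the $k[x]$-structure), and it is the one point where I would lean on Asanuma's general theory of affine fibrations rather than an ad hoc argument; everything else---the fibration check, the explicit $R$-LND, the computation $C\cong k^{[1]}$, and the localisation reductions---is bookkeeping with the given presentation.
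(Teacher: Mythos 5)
First, a point of reference: the paper itself gives no proof of Theorem~\ref{Ae} --- it is quoted from Asanuma (\cite[Theorem 5.1, Corollary 5.3]{A}, \cite[Theorem 1.1]{A2}) and used as a black box --- so your sketch can only be measured against Asanuma's original arguments. Much of your preliminary analysis is correct and genuinely relevant: $p^e\nmid sp$ does force $e\ge 2$; $A$ is free over $k[x][Y,T]$ and hence flat over $k[x]$; the generic fibre is visibly $k(x)^{[2]}$; and your telescoping computation (via $T\mapsto w^{p^e}$, $Z\mapsto -(w+w^{sp})$, recovering $w^{p^{e-1}},\dots,w$ successively) correctly shows $k[Z,T]/(Z^{p^e}+T+T^{sp})\cong k^{[1]}$, so $A$ is indeed an $\A^2$-fibration over $k[x]$. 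Your reduction of (1) to the claim that $Z^{p^e}+T+T^{sp}$ is a coordinate of $k[Z,T,W]=k^{[3]}$ is also the right reduction. But that claim is itself a nontrivial theorem (essentially Asanuma's Corollary 5.3): the general fibration machinery you invoke (Bass--Connell--Wright, Asanuma's Theorem on polynomial fibre rings) only yields $A^{[m]}\cong_{k[x]}k[x]^{[m+2]}$ for \emph{some} $m$, and nothing in your ``closer count'' pins $m$ down to $1$. So (1) is correctly reduced but not proved.

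The serious gap is in (2), which is the entire content of the theorem. Your proposed mechanism --- that an isomorphism $A\cong_{k[x]}k[x]^{[2]}$ ``propagates down the tower'' $A=A_0\supset A_1\supset\cdots$ of kernels of the derivations $\partial_{Z^{p^j}}$ and ultimately forces $k[x][Y,T]/(x^mY+T+T^{sp})\cong_{k[x]}k[x]^{[1]}$ --- is not an argument. An abstract $k[x]$-isomorphism $A\cong k[x]^{[2]}$ bears no canonical relation to the subring $\ker\partial_Z$ (the target $k[x]^{[2]}$ carries a huge supply of locally nilpotent derivations with very different kernels), so there is no reason an isomorphism at one level of the tower induces one at the next; moreover the surface $k[x][Y,T]/(x^mY+T+T^{sp})$ is not even a member of your tower but an unexplained further specialization. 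You concede this is the hard step and defer to ``Asanuma's general theory,'' but that theory cannot distinguish $A$ from $k[x]^{[2]}$: both are $\A^2$-fibrations over $k[x]$ with free module of differentials. The actual proof of (2) rests on a different and genuinely hard input, namely that under the hypotheses $p^e\nmid sp$ and $sp\nmid p^e$ the Segre--Nagata line $Z^{p^e}+T+T^{sp}=0$ is \emph{not rectifiable} in $\A^2_k$ (it is a line but not a coordinate line); Asanuma shows that $A\cong_{k[x]}k[x]^{[2]}$ would force rectifiability. That non-rectifiability statement appears nowhere in your sketch, and the fact your argument bottoms out on --- the disconnectedness of $k[Y,T]/(T+T^{sp})$ --- is far too weak to carry it. As written, (2) is unproved.
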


In \cite[Theorem 2.2]{A2}, Asanuma used the above example to construct non-linearizable 
algebraic torus actions on $\A^n_k$ over any infinite field $k$ of positive characteristic
when $n \ge 4$. The following problem occurs in the same paper (see \cite[Remark 2.3]{A2}).

\medskip

\noindent
{\bf Question.} Let $A$ be as in Theorem \ref{Ae}. Is $A \cong_k k^{[3]}$?

\medskip

In section 3 of this paper we shall use techniques developed by L. Makar-Limanov
and A. Crachiola to show (Theorem \ref{ce}) 
that the answer to the above problem is negative
when $m>1$. Thus, in view of Theorem \ref{Ae}, 
we get counter-examples to the Cancellation Problem in positive 
characteristic for $n=3$ (Corollary \ref{cor}).

\section{Preliminaries}

{\bf Definition.}
Let $A$ be a $k$-algebra and let $\phi: A \to A^{[1]}$ be a 
$k$-algebra homomorphism. For an indeterminate $U$ over $A$, let the notation 
$\phi_U$ denote the map $\phi: A \to A[U]$.  
$\phi$ is said to be an {\it exponential map on $A$} if $\phi$ satisfies the following two properties:
\begin{enumerate}
 \item [\rm (i)] $\varepsilon_0 \phi_U$ is identity on $A$, where 
 $\varepsilon_0: A[U] \to A$ is the evaluation at $U = 0$.
 \item[\rm (ii)] $\phi_V \phi_U = \phi_{V+U}$, where 
 $\phi_V: A \to A[V]$ is extended to a homomorphism 
 $\phi_V: A[U] \to A[V,U]$ by  setting $\phi_V(U)= U$.
 \end{enumerate}
When $A$ is the coordinate ring of an affine $k$-variety, any 
action of the additive group $\G_a:= (k, +)$ on the variety 
corresponds to an exponential map $\phi$, the two axioms of a group action on a set translate
into the conditions (i) and (ii) above. 

The ring of $\phi$-invariants of an exponential map $\phi$ on $A$
is a subring of $A$ given by 
$$
A^{\phi} = \{a \in A\,| \,\phi (a) = a\}.
$$
An exponential map $\phi$ is said to be non-trivial if $A^{\phi} \neq A$.

The {\it Derksen invariant} of a $k$-algebra $A$, 
denoted by ${\rm DK} (A)$,
is defined to be the subring of $A$ generated by the $A^{\phi}$s, where 
$\phi$ varies over the set of non-trivial exponential maps on $A$.

We summarise below some useful properties of an exponential map $\phi$.

\begin{lem}\label{exp3}
Let $A$ be an affine domain over a field $k$. Suppose that there exists a non-trivial
exponential map $\phi$ on $A$. Then the following statements hold:
 \begin{enumerate}
\item [\rm (i)] $A^{\phi}$ is factorially closed in $A$, i.e., 
if $a, b \in A$ such that $0\neq ab \in A^{\phi}$, 
then $a, b \in A^{\phi}$.
\item [\rm (ii)] $A^{\phi}$ is algebraically closed in $A$.
\item [\rm (iii)] $\td_k (A^{\phi}) = \td_k (A) -1$. 
\item[\rm (iv)] There exists $c \in A^{\phi}$ such that $A[c^{-1}]= A^{\phi}[c^{-1}]^{[1]}$. 
\item [\rm (v)] If $\td_k (A)=1$ then  $A= \tilde{k}^{[1]}$, where $\tilde{k}$ is the algebraic closure 
of $k$ in $A$ and $A^{\phi} = \tilde{k}$.
\item[\rm (vi)] Let $S$ be a multiplicative subset of $A^{\phi}\setminus \{0\}$. 
Then $\phi$ extends 
to a non-trivial exponential map $S^{-1}\phi$ on $S^{-1}A$ by setting 
$(S^{-1}\phi) (a/s) = \phi(a)/s$ for $a \in A$ and $s \in S$. 
Moreover, the ring of invariants of $S^{-1}\phi$ is $S^{-1}(A^{\phi})$.
\end{enumerate}
\end{lem}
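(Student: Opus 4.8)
The plan is to deduce all six items from one tool, the \emph{degree function} of $\phi$. For $0\ne a\in A$ write $\phi(a)=\sum_{i\ge 0}\phi_i(a)U^i$, with $\phi_i\colon A\to A$ additive and $\phi_0=\mathrm{id}$ by axiom (i), and set $\deg_\phi(a):=\deg_U\phi(a)$. Since $\varepsilon_0\phi(a)=a\ne 0$ we have $\phi(a)\ne 0$, so $\deg_\phi$ takes values in $\mathbb{Z}_{\ge 0}$; and since $A[U]$ is a domain, $\deg_\phi(ab)=\deg_\phi(a)+\deg_\phi(b)$, $\deg_\phi(a+b)\le\max\{\deg_\phi(a),\deg_\phi(b)\}$, and $\deg_\phi(a)=0$ exactly when $\phi(a)\in A$, i.e.\ (apply $\varepsilon_0$) when $\phi(a)=a$, i.e.\ when $a\in A^\phi$. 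Comparing the coefficients of $U^\ell V^m$ in axiom (ii), $\phi_V\phi_U=\phi_{V+U}$, gives the identities $\phi_m\phi_\ell=\binom{\ell+m}{\ell}\phi_{\ell+m}$ on $A$; in particular, if $\deg_\phi(a)=n$ then $\phi_m(\phi_n(a))=\binom{n+m}{n}\phi_{n+m}(a)=0$ for every $m\ge 1$, so the leading coefficient $\phi_n(a)$ lies in $A^\phi$. The same remarks hold for any exponential map on any affine domain.

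Items (i), (ii), (vi) are then quick. \emph{(i)}: if $0\ne ab\in A^\phi$ then $0=\deg_\phi(ab)=\deg_\phi(a)+\deg_\phi(b)$ forces $\deg_\phi(a)=\deg_\phi(b)=0$. \emph{(ii)}: if some $a\in A\setminus A^\phi$ were algebraic over $A^\phi$, a relation $\sum_{i=0}^n b_i a^i=0$ with $b_i\in A^\phi$ and $b_n\ne 0$ becomes $\sum_i b_i\phi(a)^i=0$ in $A[U]$ after applying $\phi$, but $b_n\phi(a)^n$ has $U$-degree $n\deg_\phi(a)>0$, exceeding that of every other summand, with nonzero leading coefficient ($A$ being a domain) --- a contradiction. \emph{(vi)}: the two axioms for $S^{-1}\phi$ pass formally from those for $\phi$ (using $S\subseteq A^\phi$); if $a/s$ is $S^{-1}\phi$-invariant then $t(\phi(a)-a)=0$ for some $t\in S$, so $\phi(ta)=t\phi(a)=ta$ and $a/s=(ta)/(ts)\in S^{-1}(A^\phi)$; and $S^{-1}\phi$ is non-trivial by (i), since $a\in S^{-1}(A^\phi)$ for some $a\in A$ would give $sa\in A^\phi$ with $0\ne s\in A^\phi$, hence $a\in A^\phi$.

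The heart of the matter is \emph{(iv)}, the local slice theorem. Fix $a\in A$ realising $n:=\min\{\deg_\phi(b):b\in A\setminus A^\phi\}$, put $c:=\phi_n(a)\in A^\phi\setminus\{0\}$, and pass to $B:=A[c^{-1}]$ with its induced exponential map $\psi=S^{-1}\phi$, $S=\{c^j\}_{j\ge 0}$. By (vi), $B^\psi=A^\phi[c^{-1}]$; moreover $\deg_\psi$ restricts to $\deg_\phi$ on $A$ and takes no new values on $B$, so $n$ is still the least positive $\psi$-degree on $B$. The key claim is that \emph{every} value of $\deg_\psi$ on $B$ is a multiple of $n$. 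Granting it, an induction on $\deg_\psi(b)$ shows $B=B^\psi[a]$: if $\deg_\psi(b)=m=jn>0$, then $b-\lambda c^{-j}a^j$ has strictly smaller $\psi$-degree, where $\lambda\in B^\psi$ is the leading coefficient of $\psi(b)$ and $c^j$ (a unit of $B$) that of $\psi(a^j)$. Since $a$ is transcendental over $B^\psi$ by (ii), this gives $B=B^\psi[a]\cong_k(B^\psi)^{[1]}=A^\phi[c^{-1}]^{[1]}$, which is (iv). The divisibility claim is where I expect the real difficulty to lie, and the only place where positive characteristic genuinely intrudes: using the identities $\psi_m\psi_\ell=\binom{\ell+m}{\ell}\psi_{\ell+m}$ together with Lucas' theorem --- which governs when a binomial coefficient is nonzero in $k$ --- one first shows that $n$ must be a power of $p$, say $n=p^d$; then, taking a counterexample $b$ of least $\psi$-degree $m$ (so $p^d\nmid m$), one applies a suitably chosen $\psi_{p^i}$ or $\psi_{kp^d}$ to $b$ and invokes Lucas' theorem once more to produce either a counterexample of smaller $\psi$-degree or a nonzero element of $\psi$-degree strictly between $0$ and $n$ --- each impossible. (In characteristic $0$ the claim is immediate, since $\deg_\phi(\phi_1(a))=\deg_\phi(a)-1$ already forces $n=1$; handling the general case is the obstacle.)

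Lastly \emph{(iii)} and \emph{(v)}. For (iii): $A^\phi$ is algebraically closed in $A$ by (ii) and $A^\phi\subsetneq A$, so any transcendence basis of $A^\phi$ over $k$ together with one element of $A\setminus A^\phi$ stays algebraically independent over $k$, whence $\td_k A\ge\td_k A^\phi+1$; conversely (iv) gives $\td_k A=\td_k A[c^{-1}]=\td_k A^\phi[c^{-1}]+1=\td_k A^\phi+1$. For (v): if $\td_k A=1$ then $\td_k A^\phi=0$ by (iii), so the domain $A^\phi$ is integral over $k$, hence a field, and equals the algebraic closure $\tilde k$ of $k$ in $A$ ($A^\phi\subseteq\tilde k$ is clear, and $\tilde k\subseteq A^\phi$ because $\tilde k$ is algebraic over $A^\phi$, which is algebraically closed in $A$); then (iv) gives $A[c^{-1}]=\tilde k[c^{-1}]^{[1]}=\tilde k^{[1]}$ for some $0\ne c\in\tilde k$, and since $\tilde k$ is a field $c$ is already a unit of $A$, so $A=A[c^{-1}]=\tilde k^{[1]}$ and $A^\phi=\tilde k$.
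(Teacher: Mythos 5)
Your proposal is correct, but note first that the paper does not actually prove this lemma: it cites \cite{C} (pp.~1291--1292) for items (i)--(iv) and then derives (v) and (vi) from them. What you have written is essentially a self-contained reconstruction of the standard Makar-Limanov/Crachiola argument that the citation points to --- the degree function $\deg_\phi$, the fact that the leading coefficient of $\phi(a)$ lies in $A^\phi$ (via the identity $\phi_m\phi_\ell=\binom{\ell+m}{\ell}\phi_{\ell+m}$), and the local-slice construction for (iv). So the route is the same as the source's rather than a new one; what it buys is that the lemma no longer rests on an external reference. Your deductions of (i), (ii), (iii), (v) and (vi) are all sound; in (v) you correctly use both that an integral domain algebraic over a field is a field and that the resulting $c\in\tilde{k}$ is already a unit of $A$.

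The one place where you stop at a sketch is the divisibility claim in (iv), that every value of $\deg_\psi$ on $B$ is a multiple of $n$. Your outline is right and does complete, in fact more cheaply than you suggest. From $\phi_j(\phi_\ell(b))=\binom{\ell+j}{\ell}\phi_{\ell+j}(b)$ one gets, for $b$ of degree $m$, that the coefficient of $U^{m-\ell}$ in $\phi(\phi_\ell(b))$ is $\binom{m}{\ell}\phi_m(b)$ while all higher coefficients vanish; hence $\deg_\phi(\phi_\ell(b))=m-\ell$ exactly when $\binom{m}{\ell}\neq 0$ in $k$. Applying this with $b=a$ and $m=n$ forces $\binom{n}{\ell}\equiv 0 \pmod{p}$ for all $0<\ell<n$, so $n=p^d$ by Lucas' theorem. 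For a general $b$ with $m=\deg_\psi(b)$ not divisible by $p^d$ (and $m>n$, the case $m<n$ being an immediate contradiction), take $\ell=p^d\lfloor m/p^d\rfloor$; Lucas' theorem gives $\binom{m}{\ell}\not\equiv 0\pmod{p}$, so $\psi_\ell(b)$ is a non-invariant element of degree $m\bmod p^d\in(0,n)$, contradicting the minimality of $n$. No induction on a least counterexample is needed. With that paragraph written out, the proof is complete.
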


\begin{proof}
Statements (i)--(iv) occur in \cite[p. 1291--1292]{C};
(v) follows from (ii), (iii) and (iv); 
(vi) follows from the definition.
\end{proof}

Let $A$ be an affine domain over a field $k$.
$A$ is said to have a {\it proper $\bZ$-filtration} if
there exists a collection of $k$-linear subspaces $\{A_n\}_{n \in \bZ}$ of $A$ satisfying 
\begin{enumerate}
\item [\rm (i)] $A_n \subseteq A_{n+1}$ for all $n \in \bZ$,
\item [\rm (ii)]  $A= \bigcup_{n \in \bZ} A_n$,
\item [\rm (iii)] $\bigcap_{n\in \bZ} A_n = (0)$ and 
\item [\rm (iv)]  $(A_n\setminus A_{n-1}). (A_m \setminus A_{m-1}) 
\subseteq A_{n+m} \setminus A_{n+m-1}$ for all $n, m \in \bZ$.
\end{enumerate}
Any proper $\bZ$-filtration on $A$ determines the following $\bZ$-graded integral domain
$$
\gr (A) : = \bigoplus_i A_i/A_{i-1}.
$$
There exists a map
$$
\rho: A \to \gr (A)~~ {\text{defined by}}~~ \rho(a) = a + A_{n-1}, ~~{\rm if}~~ a \in A_n \setminus A_{n-1}.
$$

We shall call a proper $\bZ$-filtration $\{A_n\}_{n \in \bZ}$ of $A$ 
to be {\it admissible} if there exists a finite generating set $\Gamma$
of $A$ such that, for any $n \in \bZ$ and $a \in A_n$, $a$ can be written
as a finite sum of monomials in elements of $\Gamma$ and each of these monomials 
are elements of $A_n$. 

\begin{rem}\label{grmap}
 {\em 
(1) Note that $\rho$ is not a ring homomorphism.  For instance,
if $i <n $ and $ a_1, \cdots, a_{\ell} \in A_{n}\setminus A_{n-1}$
such that $a_1+ \cdots+ a_{\ell} \in A_i\setminus A_{i-1} (\subseteq A_{n-1})$,
then $\rho (a_1+ \cdots+ a_{\ell}) \neq 0$ but 
$\rho (a_1) +\cdots + \rho (a_{\ell}) = a_1+ \cdots+ a_{\ell} + A_{n-1}= 0$ in $\gr (A)$.

(2) Suppose that $A$ has a proper $\bZ$-filtration and 
a finite generating set $\Gamma$ which makes the filtration admissible.
Then $\gr(A)$ is generated by $\rho(\Gamma)$, since if
$ a_1, \cdots, a_{\ell}$ and $a_1+ \cdots+ a_{\ell} \in A_n\setminus A_{n-1}$,
then $\rho(a_1+ \cdots+ a_{\ell}) = \rho (a_1) +\cdots + \rho (a_{\ell})$
and $\rho(ab)= \rho(a)\rho(b)$ for any $a, b \in A$.

(3) Suppose that $A$ has a $\bZ$-graded algebra structure, 
say, $A = \bigoplus_{i \in \bZ} C_i$. Then 
there exists a proper $\bZ$-filtration $\{A_n\}_{n \in \bZ}$ on $A$ 
defined by $A_n:= \bigoplus_{i \le n} C_i$.
Moreover, $\gr (A)  = \bigoplus_{n \in \bZ} A_n/A_{n-1} \cong \bigoplus_{n \in \bZ} C_n= A$
and, for any element $a \in A$,  the image of $\rho (a)$ under the isomorphism 
$\gr (A)  \to A$ is the homogeneous component of $a$ in $A$ of maximum degree.
If $A$ is a finitely generated $k$-algebra,
then the above filtration on $A$ is admissible.    
}
\end{rem}

The following version of a result of H. Derksen, O. Hadas and L. Makar-Limanov
is presented in \cite[Theorem 2.6]{C}.

\begin{thm}\label{MDH}
Let $A$ be an affine domain over a field $k$ with a proper 
$\bZ$-filtration which is admissible.
Let $\phi$ be a non-trivial exponential map on $A$.
Then $\phi$ induces a non-trivial exponential map $\bar{\phi}$ on 
$\gr (A)$ such that $\rho( {A^{\phi}}) \subseteq {\gr (A)}^{\bar{\phi}}$.
\end{thm}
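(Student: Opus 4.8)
The plan is to realise $\bar\phi$ as the associated-graded homomorphism $\gr(\phi)$ of $\phi$ with respect to a suitable extension of the filtration from $A$ to $A[U]$. Write $\phi(a)=\sum_{i\ge 0}\phi_i(a)U^i$ for $a\in A$; each $\phi_i\colon A\to A$ is $k$-linear, $\phi_0$ is the identity, and, since $\phi$ is a ring homomorphism, $\phi_n(ab)=\sum_{i+j=n}\phi_i(a)\phi_j(b)$. For $a\ne 0$ put $\|a\|:=n$ if $a\in A_n\setminus A_{n-1}$; by axiom (iv) of a proper filtration, $\|uv\|=\|u\|+\|v\|$ for nonzero $u,v$. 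Given an integer $w$, extend the filtration to $A[U]$ by declaring $\|U\|=w$, i.e.\ $(A[U])_n:=\{\sum_j a_jU^j : a_j\in A_{n-jw}\}$. One checks that this is again a proper, admissible $\bZ$-filtration (with finite generating set $\Gamma\cup\{U\}$) and that $\gr(A[U])\cong\gr(A)[U]$ with $U$ homogeneous of degree $w$. The homomorphism $\phi\colon A\to A[U]$ is \emph{filtered} for this choice, i.e.\ $\phi(A_n)\subseteq(A[U])_n$ for all $n$, precisely when $\|\phi_i(a)\|+iw\le\|a\|$ for every $a\ne 0$ and every $i\ge 1$ with $\phi_i(a)\ne 0$.

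The crucial step is to show that an optimal such $w$ exists. Using admissibility, $A_n$ is spanned by monomials $\gamma_1\cdots\gamma_r$ in $\Gamma$ with $\sum_\ell\|\gamma_\ell\|\le n$; since $\phi(\gamma_1\cdots\gamma_r)=\prod_\ell\phi(\gamma_\ell)$ and $\|\cdot\|$ is additive on products, expanding the product shows that ``$\phi$ is filtered with weight $w$'' is equivalent to the finitely many inequalities $\|\phi_i(\gamma)\|+iw\le\|\gamma\|$ for $\gamma\in\Gamma$ and $i\ge 1$ with $\phi_i(\gamma)\ne 0$. Hence $\phi$ is filtered with weight $w$ if and only if $w\le w_0$, where
$$ w_0:=\min\Bigl\{\, \tfrac{\|\gamma\|-\|\phi_i(\gamma)\|}{i} \;:\; \gamma\in\Gamma,\ i\ge 1,\ \phi_i(\gamma)\ne 0 \,\Bigr\}. $$
This set is nonempty because $\phi$ is non-trivial (so $\phi(\gamma)\ne\gamma$ for some $\gamma\in\Gamma$, whence $\phi_i(\gamma)\ne 0$ for some $i\ge 1$); thus $w_0$ is a well-defined rational number, and after rescaling the filtration (replacing $\{A_n\}$ by $\{A_{\lfloor n/N\rfloor}\}$ for a suitable $N$) we may assume $w_0\in\bZ$. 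We then take $w:=w_0$.

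Granting this, $\bar\phi:=\gr(\phi)\colon\gr(A)\to\gr(A)[U]$ is a graded $k$-algebra homomorphism which sends a leading form $\rho(a)$ to $\sum_i\rho(\phi_i(a))U^i$, the sum extended over those $i$ with $\|\phi_i(a)\|+iw_0=\|a\|$. Since $\phi_0$ is the identity, the $U^0$-term of $\bar\phi(\rho(a))$ is always $\rho(a)$, which yields axiom (i) for $\bar\phi$; and extending the filtration further to $A[U,V]$ with $\|U\|=\|V\|=w_0$ makes $\phi_V\colon A[U]\to A[U,V]$ and $\phi_{V+U}\colon A\to A[U,V]$ filtered as well, so applying $\gr$ to the identity $\phi_V\phi_U=\phi_{V+U}$ yields axiom (ii) for $\bar\phi$. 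If $a\in A^\phi$ then $\phi_i(a)=0$ for all $i\ge 1$, hence $\bar\phi(\rho(a))=\rho(a)$; this gives $\rho(A^\phi)\subseteq\gr(A)^{\bar\phi}$. Finally, by the very definition of $w_0$ there exist $\gamma\in\Gamma$ and $i\ge 1$ with $\phi_i(\gamma)\ne 0$ and $\|\phi_i(\gamma)\|+iw_0=\|\gamma\|$, so $\bar\phi(\rho(\gamma))$ has a nonzero term in $U^i$ with $i\ge 1$; thus $\bar\phi(\rho(\gamma))\ne\rho(\gamma)$ and $\bar\phi$ is non-trivial.

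I expect the main obstacle to be the second paragraph: showing that the optimal weight $w_0$ is finite --- equivalently, that ``$\phi$ is filtered'' is governed by only finitely many inequalities --- which is precisely where admissibility of the filtration is indispensable. The remaining points (that the weighted filtration on $A[U]$ is proper and admissible with $\gr(A[U])\cong\gr(A)[U]$, and the functoriality $\gr(f\circ g)=\gr(f)\circ\gr(g)$ for filtered homomorphisms) are routine.
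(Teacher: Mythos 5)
The paper does not prove this statement itself---it quotes it from \cite[Theorem 2.6]{C} (after Derksen--Hadas--Makar-Limanov)---and your argument is essentially the proof given there: extend the filtration to $A[U]$ by assigning $U$ the optimal weight $w_0$, which is determined by finitely many inequalities on a generating set precisely because the filtration is admissible, and take the associated graded homomorphism. Your write-up is correct, and it handles the two points that genuinely need care: the finiteness of $w_0$ (where admissibility enters) and the tightness of at least one defining inequality (which forces $\bar{\phi}$ to be non-trivial); the rescaling to arrange $w_0 \in \bZ$ is a harmless normalization.
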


The following observation is crucial to our main theorem.

\begin{lem}\label{r1}
Let $A$ be an affine $k$-algebra such that $\td_k A >1$. If ${\rm DK} (A) \subsetneqq A$, then $A$ 
is not a polynomial ring over $k$.
\end{lem}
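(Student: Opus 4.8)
The plan is to prove the contrapositive: assuming $A$ is a polynomial ring over $k$, I will show that ${\rm DK}(A) = A$. Since the transcendence degree of $k^{[m]}$ over $k$ equals $m$, the hypothesis $\td_k A > 1$ forces $A \cong k^{[m]}$ with $m \geq 2$; fix such an isomorphism and write $A = k[X_1, \dots, X_m]$.

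For each index $i \in \{1, \dots, m\}$, I would introduce the ``translation'' map $\phi_i \colon A \to A[U]$ determined by $\phi_i(X_i) = X_i + U$ and $\phi_i(X_j) = X_j$ for $j \neq i$. Checking that $\phi_i$ is an exponential map is a direct verification of axioms (i) and (ii) of the definition: evaluating $U$ at $0$ recovers the identity of $A$, and the substitution rule $\phi_{i,V}\phi_{i,U} = \phi_{i,V+U}$ is just the additivity of translation in the $X_i$-coordinate. Moreover $\phi_i$ is non-trivial, and since $\phi_i(f) = f(X_1,\dots,X_i+U,\dots,X_m)$ equals $f$ in $A[U]$ precisely when $f$ does not involve $X_i$, its ring of invariants is $A^{\phi_i} = k[X_1, \dots, X_{i-1}, X_{i+1}, \dots, X_m]$.

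Now, because $m \geq 2$, for every $\ell$ there is some $i \neq \ell$, and then $X_\ell \in A^{\phi_i}$. Hence ${\rm DK}(A)$, being by definition the subring of $A$ generated by all the $A^{\phi}$'s as $\phi$ ranges over non-trivial exponential maps, contains every variable $X_\ell$ and therefore contains $k[X_1, \dots, X_m] = A$. Thus ${\rm DK}(A) = A$, contradicting the hypothesis ${\rm DK}(A) \subsetneqq A$; this proves the lemma.

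Since the argument is a direct construction, there is no genuine obstacle here; the only points deserving care are the routine check that each $\phi_i$ satisfies the two exponential-map axioms, and the use of transcendence degree to guarantee at least two variables, which is exactly what allows one variable to be ``cancelled'' by $\phi_i$ while still appearing in the invariant ring of some other $\phi_j$.
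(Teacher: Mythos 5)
Your proof is correct and follows essentially the same route as the paper: you take the contrapositive, introduce the coordinate-translation exponential maps $\phi_i$ with invariant rings $k[X_1,\dots,X_{i-1},X_{i+1},\dots,X_m]$, and use $m\ge 2$ to recover every variable inside ${\rm DK}(A)$. The extra detail you supply (verifying the two axioms and identifying $A^{\phi_i}$ explicitly) is exactly what the paper leaves implicit.
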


\begin{proof}
Suppose that $A = k[X_1, \dots, X_n]$, a polynomial ring in $n( >1)$ variables over $k$.
For each $i$, $1\le i\le n$, consider the exponential map 
$\phi_i: A \to A[U]$ defined by $\phi_i(X_j) = X_j + \delta_{ij} U$, where
$\delta_{ij} = 1$ if $j=i$ and zero otherwise.
Then $A^{\phi_i} = k[X_1, \dots, {X_{i-1}}, X_{i+1},  \dots, X_n]$. 
It follows that ${\rm DK} (A) = A$. Hence the result.  
\end{proof}

\section{Main Theorem}

In this section we will prove our main result (Theorem \ref{ce}, Corollary \ref{cor}).
We first prove a few lemmas on the existence of exponential maps on certain affine domains.

Let $L$ be a field with algebraic closure $\bar{L}$.
An $L$-algebra $D$ is said to be {\it geometrically integral}
if $D \otimes_L \bar{L}$ is an integral domain. 
It is easy to see that if an $L$-algebra $D$ is geometrically integral 
then $L$ is algebraically closed in $D$. We therefore have the following lemma.

\begin{lem}\label{geo}
Let $L$ be a field and $D$ a geometrically integral $L$-algebra
such that $\td_L D =1$. If $D$ admits a non-trivial exponential map
then $D= L^{[1]}$. 
\end{lem}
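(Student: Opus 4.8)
The plan is to reduce to the algebraically closed case by base change and then invoke part (v) of Lemma~\ref{exp3}. First I would set $D' := D \otimes_L \bar{L}$; by hypothesis $D'$ is an integral domain, and it is an affine $\bar{L}$-algebra with $\td_{\bar{L}} D' = \td_L D = 1$. Next I would check that a non-trivial exponential map $\phi$ on $D$ extends to a non-trivial exponential map $\phi' = \phi \otimes_L \bar{L}$ on $D'$: since $\phi: D \to D[U]$ is $L$-linear, tensoring with $\bar{L}$ gives $\phi': D' \to D'[U]$, and axioms (i) and (ii) of an exponential map are preserved because they are identities of $L$-algebra homomorphisms which remain valid after the faithfully flat base change. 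Non-triviality is preserved because $D^\phi \otimes_L \bar{L} \subseteq (D')^{\phi'}$ and, $D^\phi$ being a proper $L$-subspace of $D$ (as $\phi$ is non-trivial), $D^\phi \otimes_L \bar{L}$ is a proper subring of $D'$ by flatness; hence $(D')^{\phi'} \neq D'$.

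Now I would apply Lemma~\ref{exp3}(v) to the affine domain $D'$ over the algebraically closed field $\bar{L}$: since $\td_{\bar{L}} D' = 1$ and $D'$ admits the non-trivial exponential map $\phi'$, we get $D' = \widetilde{\bar{L}}^{[1]}$, where $\widetilde{\bar{L}}$ is the algebraic closure of $\bar{L}$ in $D'$. But $\bar{L}$ is already algebraically closed, so $\widetilde{\bar{L}} = \bar{L}$ and thus $D' = D \otimes_L \bar{L} \cong \bar{L}^{[1]}$, a polynomial ring in one variable over $\bar{L}$.

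It remains to descend: from $D \otimes_L \bar{L} \cong \bar{L}^{[1]}$ one must conclude $D \cong L^{[1]}$. Here I would use that $D$ is geometrically integral, hence $L$ is algebraically closed in $D$, together with the fact that $D \otimes_L \bar L$ being a polynomial ring forces $D$ to be a one-dimensional normal (indeed regular) affine $L$-domain with trivial units and trivial Picard group, and a finitely generated $L$-subalgebra of $\bar L^{[1]}$ whose base change recovers all of $\bar L^{[1]}$. An element $t \in D$ mapping to a coordinate of $\bar L^{[1]}$ (after clearing the finite extension over which it is defined, and using that $L$ is algebraically closed in $D$ to show the relevant coefficients already lie in $L$) generates $D$ over $L$ and is transcendental, giving $D = L[t] = L^{[1]}$. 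I expect this descent step to be the main obstacle: one has to argue carefully that the isomorphism over $\bar L$ can be chosen compatibly with the $L$-structure, ruling out forms of the affine line that become trivial only after base extension — which is exactly where geometric integrality (equivalently, $L$ algebraically closed in $D$) is used, since over a non-separably-closed field nontrivial forms of $\A^1$ can exist when $L$ is not algebraically closed in the coordinate ring.
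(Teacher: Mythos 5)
Your overall strategy is a detour, and the step you yourself flag as ``the main obstacle'' is a genuine gap --- in fact the descent you propose is not just hard to make precise, it is false as a general implication. Over an imperfect field $L$ of characteristic $p$ there exist nontrivial purely inseparable forms of the affine line, e.g.\ $D=L[X,Y]/(Y^p-X-aX^p)$ with $a\notin L^p$: here $D\otimes_L\bar L\cong\bar L^{[1]}$ (substitute $Z=Y-a^{1/p}X$), so $D$ is geometrically integral and $L$ is algebraically closed in $D$, yet $D\ncong L^{[1]}$. So ``geometrically integral, regular, trivial units and Picard group, base change is $\bar L^{[1]}$'' does not imply $D\cong L^{[1]}$, and your remark that nontrivial forms only occur when $L$ fails to be algebraically closed in the coordinate ring is incorrect --- the standard nontrivial forms are geometrically integral. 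The only way to rule them out here is to use the exponential map on $D$ itself, which your argument discards after base-changing to $\bar L$. (A smaller quibble: your justification of non-triviality of $\phi'$ via properness of $D^\phi\otimes_L\bar L$ inside $(D')^{\phi'}$ proves nothing; the correct, easy argument is to take $a\in D\setminus D^\phi$ and observe $\phi'(a)=\phi(a)\neq a$.)

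The fix is to skip the base change entirely, which is what the paper does: Lemma~\ref{exp3}(v) is stated for an affine domain over an \emph{arbitrary} field, so it applies directly to $D$ over $L$ and gives $D=\tilde L^{[1]}$ where $\tilde L$ is the algebraic closure of $L$ in $D$. Geometric integrality then gives $\tilde L=L$, hence $D=L^{[1]}$. That is the entire proof; the hypothesis that $D$ carries a non-trivial exponential map must be exploited over $L$ itself, not only after extension to $\bar L$.
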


\begin{proof}
 Follows from Lemma \ref{exp3} (v) and the fact that $L$ is algebraically closed in $D$.
\end{proof}

\begin{lem}\label{prime1}
Let  $L$ be a field of characteristic $p >0$ and $Z, T$ be two indeterminates over $L$. 
Let $g = Z^{p^e} -\alpha T^m +\beta$, where $0 \neq \alpha, 0\neq \beta \in L$, 
$e\ge 1$ and $m>1$ are integers such that $m$ is coprime to $p$. Then
$D = L[Z, T]/(g)$ does not admit any non-trivial exponential map.
\end{lem}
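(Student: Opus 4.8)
The plan is to apply Lemma \ref{geo}: if $D$ is geometrically integral over $L$, then any non-trivial exponential map on $D$ forces $D \cong L^{[1]}$, and I will exclude this because the plane curve cut out by $g$ is singular while $\A^1$ is smooth.

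First I would check that $D$ is geometrically integral, i.e. that $g$ remains irreducible over the algebraic closure $\bar L$. Write $g = Z^{p^e} - f(T)$ with $f(T) = \alpha T^m - \beta$, and regard $g$ as a polynomial in $Z$ over the field $F = \bar L(T)$. By the classical criterion for irreducibility of $X^n - a$ over a field, and since the only prime dividing $n = p^e$ is $p$, it suffices to verify that $f(T) \notin F^p$ and, in the case $4 \mid p^e$ (i.e. $p = 2$, $e \ge 2$), that $f(T) \notin -4F^4$. The latter is automatic since $-4 = 0$ in characteristic $2$ and $f(T) \neq 0$; the former holds because any $p$-th power in $\bar L(T)$ that happens to be a polynomial lies in $\bar L[T^p]$, whereas $f(T)$ contains the monomial $\alpha T^m$ with $\alpha \neq 0$ and $p \nmid m$. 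Hence $g$ is irreducible over $F$, and being monic (so primitive) in $Z$ over the UFD $\bar L[T]$, it is irreducible in $\bar L[Z,T]$ by Gauss's lemma. Thus $D \otimes_L \bar L \cong \bar L[Z,T]/(g)$ is a domain, so $D$ is geometrically integral over $L$ and $\td_L D = 1$.

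Next, suppose for contradiction that $D$ admits a non-trivial exponential map. By Lemma \ref{geo}, $D \cong L^{[1]}$, so $\Sp(D \otimes_L \bar L) \cong \A^1_{\bar L}$, a smooth curve. However, the Jacobian criterion shows that $V(g) \subseteq \A^2_{\bar L}$ is singular: in characteristic $p$ one has $\partial g/\partial Z = p^e Z^{p^e-1} = 0$ identically, while $\partial g/\partial T = -m\alpha T^{m-1}$ vanishes precisely at $T = 0$ (note $m\alpha \neq 0$ as $p \nmid m$); choosing $Z_0 \in \bar L$ with $Z_0^{p^e} = -\beta$, the point $(Z_0,0)$ lies on the reduced one-dimensional curve $V(g)$ and both partial derivatives vanish there, so $V(g)$ is not smooth at $(Z_0,0)$. (After the translation $Z \mapsto Z + Z_0$ the curve is locally $Z^{p^e} = \alpha T^m$, a genuine cusp because $p^e > 1$ and $m > 1$.) This contradicts $V(g) \cong \A^1_{\bar L}$, so $D$ has no non-trivial exponential map.

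The step needing the most care is the first one: in characteristic $p$ one must exclude the possibility that $Z^{p^e} - f(T)$ is a purely inseparable $p$-th power, and it is exactly the hypothesis $\gcd(m,p) = 1$ (together with $m \ge 1$, so $\deg f \ge 1$) that rules this out; the rest is a one-line Jacobian computation. Alternatively, in the last step one could replace smoothness by normality, noting that the cuspidal curve $Z^{p^e} = \alpha T^m$ with $\gcd(p^e, m) = 1$ is not integrally closed whereas $L^{[1]}$ is.
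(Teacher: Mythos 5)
Your proof is correct and takes essentially the same route as the paper's: establish geometric integrality (the paper uses Eisenstein's criterion at a prime factor of the square-free polynomial $\alpha T^m-\beta$, you use the standard irreducibility criterion for $Z^{p^e}-a$ over $\bar L(T)$ plus Gauss's lemma), then invoke Lemma \ref{geo} and rule out $D\cong L^{[1]}$ at the same point $(Z_0,0)$ --- the paper phrases this as $g\in (T,Z-\lambda)^2$ forcing non-normality, you as failure of the Jacobian criterion. No gaps.
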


\begin{proof}
We first show that $D$ is a geometrically integral $L$-algebra. 
Let $\bar{L}$ denote the algebraic closure of $L$.
Since $m$ is coprime to $p$, the polynomial $\alpha T^m -\beta$ 
is square-free in $\bar{L}[T]$ and hence, by Eisenstein's criterion, $g$ is an
irreducible polynomial in $\bar{L}[T][Z]$. Thus $D\otimes_L \bar{L}$ is an integral domain.
Let $\lambda$ be a root of the polynomial $Z^{p^e}+\beta \in \bar{L}[Z]$
and let $M = (T, Z-\lambda)\bar{L}[Z, T]$.
Then $M$ is a maximal ideal of $\bar{L}[Z, T]$ such that $g \in M^2$.
It follows that  $D\otimes_L \bar{L} (= \bar{L}[Z, T]/(g))$ is not a normal domain;
in particular, $D\neq L^{[1]}$. Hence the result, by Lemma \ref{geo}. 
\end{proof}

\begin{lem}\label{lem1}
Let $B$ be an affine domain over an infinite field $k$. 
Let $f \in B$ be such that  $f - \lambda $ is a prime element of $B$ 
for infinitely many $\lambda \in k$.  Let $\phi: B \to B[U]$ be 
a non-trivial exponential map on $B$ such that
$f \in B^{\phi}$. Then there exists $\beta \in k$ such that  $f  - \beta$ 
is a prime element of $ B$ and $\phi$
induces a non-trivial exponential map on $B/(f - \beta)$.   
\end{lem}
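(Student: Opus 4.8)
The plan is to exploit the localization behaviour of exponential maps from Lemma \ref{exp3}(iv) and (vi), together with the hypothesis that $f-\lambda$ is prime for infinitely many $\lambda$. First I would invoke Lemma \ref{exp3}(iv) to obtain a nonzero $c \in B^{\phi}$ with $B[c^{-1}] = B^{\phi}[c^{-1}]^{[1]}$. Write $B[c^{-1}] = B^{\phi}[c^{-1}][W]$ for some $W$. Since $f \in B^{\phi}$, reducing modulo $f - \beta$ should, for a suitable choice of $\beta$, descend to a polynomial-ring structure over the quotient of $B^{\phi}$; the exponential map on $B/(f-\beta)$ would then be modelled on translation in the variable $W$. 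The subtlety is that $c$ may not be a unit modulo $f-\beta$, so the localized statement does not immediately give the desired conclusion on $B/(f-\beta)$ itself — one must choose $\beta$ avoiding the "bad" locus.

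The key steps, in order, are: (1) fix $c \in B^{\phi}\setminus\{0\}$ as above and note $c \in B^{\phi}$, so $c$ lies in the subring $k[f] \cdot(\text{stuff})$ only loosely — more precisely I would consider the image $\bar c$ of $c$ in $B/(f-\beta)$ and argue that for all but finitely many $\beta \in k$ (among those with $f-\beta$ prime, still infinitely many remain), $\bar c \neq 0$, i.e. $c \notin (f-\beta)B$. This uses that $c \notin \bigcap_\lambda (f-\lambda)B$: indeed if $c$ were divisible by $f - \lambda_1$ and $f-\lambda_2$ for distinct primes, then by factorial-closedness considerations $c$ would be divisible by their product, and iterating, $c$ would have infinitely many non-associate prime divisors, impossible in a Noetherian domain. (2) For such a $\beta$ with $f-\beta$ prime and $c \notin (f-\beta)B$, set $\bar B = B/(f-\beta)B$, an affine domain. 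By Lemma \ref{exp3}(i), $A^{\phi}=B^{\phi}$ is factorially closed, so $f-\beta$ is prime in $B^{\phi}$ as well (it is prime in $B$ and lies in $B^{\phi}$), and $\bar{B^{\phi}} := B^{\phi}/(f-\beta)B^{\phi}$ injects into $\bar B$. (3) Show $\phi$ induces an exponential map $\bar\phi$ on $\bar B$: since $\phi(f)=f$ and $f-\beta \in B^{\phi}$, we have $\phi((f-\beta)B) \subseteq (f-\beta)B[U]$, so $\phi$ passes to the quotient $\bar B \to \bar B[U]$, and axioms (i),(ii) are inherited. (4) Show $\bar\phi$ is non-trivial: localize at $\bar c$. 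We have $\bar B[\bar c^{-1}]$ is a quotient of $B[c^{-1}] = B^{\phi}[c^{-1}][W]$ by the prime $f-\beta$, which lies in $B^{\phi}[c^{-1}]$; hence $\bar B[\bar c^{-1}] = \bar{B^{\phi}}[\bar c^{-1}][W]$, a genuine polynomial ring in $W$ over a domain, and the induced exponential map $(\bar c^{-1}$-localized version of) $\bar\phi$ acts by a nontrivial translation in $W$ — in particular $W \notin (\bar B[\bar c^{-1}])^{\bar\phi}$, so $\bar\phi$ is non-trivial on $\bar B[\bar c^{-1}]$, hence on $\bar B$.

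The main obstacle I anticipate is step (4): verifying cleanly that the translation structure in $W$ survives the reduction modulo $f-\beta$. One must be careful that $\phi(W)$, computed in $B[c^{-1}]=B^{\phi}[c^{-1}][W]$, has the form $W + (\text{element of } B^{\phi}[c^{-1}])$ — this follows because $W$ is transcendental over $B^{\phi}[c^{-1}]$ and $\phi$ restricted to $B^{\phi}$ is the identity, so $\phi(W)$, being a polynomial in $W$ over $B^{\phi}[c^{-1}]$ satisfying the one-parameter-group axiom, must be of degree one with leading coefficient $1$ — and then the reduction modulo the prime $f-\beta \subseteq B^{\phi}[c^{-1}]$ keeps it nonconstant as long as this additive term does not become zero, which one can again arrange by excluding finitely many further $\beta$ (or argue it is automatic since the term generating the $\G_a$-action, after localizing further at a suitable invariant, can be taken to be a unit). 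A secondary point requiring care is confirming that only finitely many $\beta$ are excluded at each stage, so that the infinitude hypothesis on $\{\lambda : f-\lambda \text{ prime}\}$ genuinely leaves a valid choice; this is where the hypothesis that $k$ is infinite is used.
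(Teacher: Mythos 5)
Your proposal is correct in substance, but it takes a genuinely different and considerably more roundabout route than the paper. The paper's proof never invokes Lemma \ref{exp3}(iv): it simply picks any $x \in B \setminus B^{\phi}$, writes $\phi(x) = x + a_1U + \cdots + a_nU^n$ with $a_n \neq 0$, and chooses $\beta$ so that $f-\beta$ is prime and $a_n \notin (f-\beta)B$ --- possible because the ideals $(f-\lambda)B$, for the infinitely many $\lambda$ with $f-\lambda$ prime, are pairwise distinct height-one primes, only finitely many of which can contain the nonzero element $a_n$. The induced map on $B/(f-\beta)$ then visibly moves $\bar{x}$, since the top $U$-coefficient survives the reduction. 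Your route through $B[c^{-1}] = B^{\phi}[c^{-1}][W]$ does work: the finite-exclusion count you use to ensure $c \notin (f-\beta)B$ must be repeated, as you note, for a nonzero coefficient of the additive part of $\phi(W)$, and with that the translation in $W$ survives reduction. But it obliges you to verify several auxiliary facts (that $f-\beta$ stays prime in $B^{\phi}$ and in $B^{\phi}[c^{-1}]$, that the polynomial-ring structure passes to the quotient, that non-triviality descends from the localization $\bar{B}[\bar{c}^{-1}]$ to $\bar{B}$), all of which the paper's direct choice of witness renders unnecessary. Two small points of care in your write-up: the justification that $c$ has only finitely many of the $f-\lambda$ as divisors is pure primality in a Noetherian domain, not ``factorial closedness''; and in characteristic $p$ the additive part $\phi(W)-W$ is a $p$-polynomial $\sum_j e_jU^{p^j}$ rather than a single linear term $aU$, though your argument only needs one nonzero coefficient to survive, so this does not affect correctness.
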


\begin{proof}
Let $x \in B \setminus B^{\phi}$. Then
$\phi (x) =  x + a_1 U + \cdots + a_{n}U^{n}$ for some $n \ge 1$, 
$a_1, \cdots, a_n \in B$ with $a_n \neq 0$. Since $B$ is affine and $k$
is infinite, there exists $\beta \in k$ such that  $f - \beta$ 
is a prime element of $B$ and $a_n \notin (f -\beta)B$. 
Let $\bar{x}$ denote the image of $x$ in $B/(f-\beta)$. 
Since $\phi(f-\beta) =f-\beta$, $\phi$ induces an exponential map ${\phi_1}$ on $B/(f-\beta)$ 
which is non-trivial since ${\phi_1}(\bar{x}) \neq \bar{x}$ by the choice of $\beta$. 
\end{proof}

\begin{lem}\label{lem3}
(i) Let $B = k [X,Y, Z,T] /(X^m Y - g)$ where $m >1$ and $g \in k[Z, T]$.
Let $f$ be a prime element of $k[Z,T]$ such that $g \notin fk[Z, T]$ and let $D= B/fB$. 
Then $D$ is an integral domain. 

(ii) Supppose that there exists a maximal ideal $M$ of $k[Z,T]$ such that $ f \in M$ and
$g \in fk[Z, T] + M^2$. Then there does not exist any non-trivial exponential map $\phi$ on 
$D$ such that the image of $Y$ in $D$ belongs to $D^{\phi}$. 
\end{lem}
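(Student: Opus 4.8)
The plan is to reduce Lemma \ref{lem3}(ii) to Lemma \ref{prime1} by localizing $D$ at the variable $X$ and identifying the resulting ring up to the filtration/graded-ring machinery. First I would set $B = k[X,Y,Z,T]/(X^mY - g)$ and $D = B/fB$, where $f$ is prime in $k[Z,T]$ with $g \notin fk[Z,T]$; part (i) gives that $D$ is a domain, so all the exponential-map tools of Lemma \ref{exp3} and Theorem \ref{MDH} apply. Suppose, for contradiction, that $\phi$ is a non-trivial exponential map on $D$ with $y \in D^{\phi}$ (writing $x, y, z, t$ for the images of $X,Y,Z,T$). The idea is that $y$, being invariant, forces $x$ to behave like an invariant as well after passing to an associated graded ring, and then the ``top-degree'' equation becomes $z^{p^e} \equiv \alpha t^m$ type relation to which Lemma \ref{prime1} applies.

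The key steps, in order: (1) Localize at $y$ or at $x$ — by Lemma \ref{exp3}(vi), if $y \in D^{\phi}$ then $\phi$ extends to $D[y^{-1}]$, and in $D[y^{-1}]$ we have $x^m = g(z,t)/y$, so $z$ and $t$ generate $D[y^{-1}]$ over $k[y^{\pm 1}]$, cutting the transcendence degree down; this should let me pin down $D^{\phi}$ up to transcendence degree $1$ via Lemma \ref{exp3}(iii). (2) More robustly, equip $D$ with an admissible $\bZ$-filtration — grading $X$, $Y$ by suitable weights (say $\w X$ negative, $\w Y$ positive so that $X^mY$ matches $\gr$ of $g$, or the reverse) so that the defining relation $X^mY = g(Z,T)$ has $\gr$ equal to $x^m y = g_0(z,t)$ for an appropriate ``leading form'' $g_0$, and $f$ specializes compatibly. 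By Theorem \ref{MDH}, $\phi$ induces a non-trivial exponential map $\bar\phi$ on $\gr(D)$ with $\rho(y) \in \gr(D)^{\bar\phi}$. (3) On $\gr(D)$, the relation together with the hypothesis $g \in fk[Z,T] + M^2$ (for a maximal ideal $M \ni f$ of $k[Z,T]$) is what produces a singularity: after killing $f$ and $\rho(y)$ appropriately, one is looking at a ring of the form $L[Z,T]/(Z^{p^e} - \alpha T^m + \beta)$, and Lemma \ref{prime1} says this has no non-trivial exponential map. The contradiction with the induced map on this quotient finishes the argument.

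The main obstacle I expect is Step (2)–(3): choosing the filtration/weights on $B$ (hence $D$) so that simultaneously (a) the filtration is admissible with respect to the natural generating set $\{x,y,z,t\}$, (b) $\gr(D)$ is still a domain with a relation of the controlled form $x^m y = (\text{leading form})$, and (c) the condition $g \in fk[Z,T]+M^2$ survives passage to $\gr$ and yields exactly the Eisenstein-type polynomial $Z^{p^e} - \alpha T^m + \beta$ (with $m$ coprime to $p$) needed to invoke Lemma \ref{prime1}. Getting the ``$M^2$'' local-singularity hypothesis to translate into a square factor after grading is the delicate point; I would likely choose $M = (z - c, t - d)$, translate coordinates so $M = (z,t)$, and track how $\rho$ acts on the lowest- versus highest-degree parts. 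Once the weights are fixed correctly, the rest is bookkeeping: apply Lemma \ref{exp3}(i)–(ii) to see that $\rho(x)$ and $\rho(y)$ lie in $\gr(D)^{\bar\phi}$ (since their product, up to the relation, is a polynomial in the invariant $\rho(z),\rho(t)$, or rather one argues factorial closedness the other way), pass to the quotient by $f$ and by one invariant, and contradict Lemma \ref{prime1}.
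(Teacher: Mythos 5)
Your plan for part (ii) rests on reducing to Lemma \ref{prime1} via a filtration/graded-ring argument, and this reduction is not available: Lemma \ref{prime1} is about the specific polynomial $Z^{p^e}-\alpha T^m+\beta$ with $m$ coprime to $p$, whereas Lemma \ref{lem3}(ii) is stated for an \emph{arbitrary} $g\in k[Z,T]$ and an arbitrary prime $f$ subject only to the local condition $f\in M$, $g\in fk[Z,T]+M^2$. No choice of weights on $x,y,z,t$ can manufacture an Eisenstein-type leading form of that shape from these hypotheses, so steps (2)--(3) --- which you yourself flag as the unresolved ``delicate point'' --- cannot be carried out for the lemma as stated. (The graded-ring machinery is indeed used in the paper, but one level up, in Lemma \ref{l1} and Proposition \ref{p1}, where $g$ does have the special form; Lemma \ref{lem3} itself is the elementary endpoint of that reduction, not another instance of it.) You also leave part (i) unaddressed beyond citing it.

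The hypothesis $g\in fk[Z,T]+M^2$ is a \emph{singularity} condition and is meant to be used directly, along the lines you begin in your step (1) and then abandon. Since $y\in D^{\phi}$, Lemma \ref{exp3}(vi) extends $\phi$ to a non-trivial exponential map on $D\otimes_{k[Y]}k(Y)$, which is a one-dimensional affine domain over the field $k(Y)$; by Lemma \ref{exp3}(v) it would have to be a polynomial ring over a field, in particular normal. But writing $g-\alpha f\in M^2$ and $\m=(M,X)k(Y)[X,Z,T]$, one has $X^mY-g+\alpha f\in\m^2$ (here $m>1$ is used for the term $X^mY$), so
$$
D\otimes_{k[Y]}k(Y)=k(Y)[X,Z,T]/(X^mY-g+\alpha f,\ f)
$$
is singular at $\m$, hence not normal --- a contradiction. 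Note that what fails is normality of the curve over $k(Y)$, not the existence of a ``square factor after grading''; your attempt to translate the $M^2$ condition into a squarefree-ness/Eisenstein statement for an associated graded ring is the wrong use of that hypothesis.
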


\begin{proof}
(i) Let $\bar{g}$ denote the image of $g$ in the integral domain $C:= k[Z,T]/(f)$.             
Since $g \notin fk[Z, T]$, it is easy to see that 
$D = B/fB = C [X, Y]/(X^m Y - \bar{g})$ is an integral domain.

(ii) Let $\alpha \in k[Z, T]$ be such that $g-\alpha f \in M^2$ and let $\m = (M, X) k(Y)[X, Z, T]$.
Since $X^mY-g+\alpha f \in \m^2$, we see that 
$$
D \otimes_{k[Y]} k(Y) = k(Y)[X, Z, T]/ (X^mY-g, f) = k(Y)[X, Z, T]/ (X^mY-g+\alpha f, f)
$$
is not a normal domain. In particular, $D \otimes_{k[Y]} k(Y)$
is not a polynomial ring over a field. The result now follows from 
Lemma \ref{exp3} (vi) and (v). 
\end{proof}

We now come to the main technical result of this paper.

\begin{lem}\label{l1}
Let $k$ be any field of positive characteristic $p$.
Let $B= k[X,Y,Z,T]/(X^m Y + T^{s} + Z^{p^e})$, 
where $m, e, s$ are positive integers such that 
$s = p^r q$, $p \nmid q$, $q >1$, $e> r \ge 1$ and $m>1$.
Let $x,y, z, t$ denote the images of $X, Y, Z, T$ in $B$.
Then there does not exist any non-trivial exponential map
$\phi$ on $B$ such that $y \in B^{\phi}$.
\end{lem}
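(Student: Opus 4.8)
The plan is to argue by contradiction: suppose $\phi$ is a non-trivial exponential map on $B$ with $y \in B^\phi$. The strategy is to equip $B$ with a suitable admissible $\bZ$-filtration, pass to the associated graded ring $\gr(B)$ via Theorem~\ref{MDH}, which produces a non-trivial exponential map $\bar\phi$ on $\gr(B)$ with $\rho(y) \in \gr(B)^{\bar\phi}$, and then exhibit a contradiction inside $\gr(B)$ using the earlier lemmas on non-existence of exponential maps. Writing $s = p^r q$ with $p \nmid q$, $q > 1$, $e > r \ge 1$, the relation $x^m y + t^s + z^{p^e} = 0$ can be manipulated: since $e > r$, we have $z^{p^e} = (z^{p^{e-r}})^{p^r}$ and $t^s = (t^q)^{p^r}$, so modulo $p$-th power considerations the relation resembles a sum of a $p^r$-th power and $x^m y$. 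I would choose weights $\w(x), \w(y), \w(z), \w(t)$ (some possibly negative) on the generators so that, in $\gr(B)$, the defining relation degenerates to something of the form $x^m y + (\text{lower-degree terms become negligible})$, or more precisely so that $\gr(B)$ carries a relation to which Lemma~\ref{prime1} or Lemma~\ref{lem3} applies after localization.

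The key steps, in order, are: (1) fix an explicit admissible $\bZ$-filtration on $B$ coming from a grading where $\w(x)$ is large and positive while $\w(y)$ is chosen so $\w(x^m y)$ matches the top-degree part of $t^s + z^{p^e}$; identify $\gr(B)$ explicitly as $k[X,Y,Z,T]/(\text{leading form})$, where the leading form is either $X^mY + T^s + Z^{p^e}$ again (if the relation is already ``homogeneous enough'') or a degeneration such as $X^m Y + Z^{p^e}$ or $X^m Y + T^s$. (2) Observe that $\rho(y) \in \gr(B)^{\bar\phi}$ and that $\gr(B)$ has the shape $C[X,Y]/(X^m Y - \bar g)$ with $C = k[Z,T]/(f)$ for an appropriate prime $f$, so Lemma~\ref{lem3}(i) gives that the relevant quotient is a domain. (3) Verify the singularity hypothesis of Lemma~\ref{lem3}(ii): find a maximal ideal $M$ of $k[Z,T]$ containing the relevant prime $f$ with $g \in fk[Z,T] + M^2$; this is where the numerical hypotheses $p^e$, $q > 1$ and $e > r$ enter, guaranteeing that the leading form has a non-smooth point witnessing non-normality of $D \otimes_{k[Y]} k(Y)$. (4) Conclude via Lemma~\ref{exp3}(v),(vi) that no such $\bar\phi$ fixing $\rho(y)$ exists on $\gr(B)$, contradicting Theorem~\ref{MDH}.

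The main obstacle I expect is step~(1): choosing the filtration/grading so that the leading form of the relation is simultaneously (a) still prime/irreducible so that $\gr(B)$ is the ``expected'' hypersurface and not something more degenerate, and (b) retains enough of the structure ($X^m Y$ plus a term in $Z,T$ with a singular point) for Lemma~\ref{lem3}(ii) to bite. Because $s = p^r q$ is divisible by $p^r$ but $p^e$ is a strictly higher power of $p$, the two monomials $T^s$ and $Z^{p^e}$ interact delicately under any $\bZ$-grading, and one may need to treat cases according to the sign of $\w(z), \w(t)$ or to combine the two lemmas (Lemma~\ref{prime1} for one degeneration, Lemma~\ref{lem3} for another). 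A secondary subtlety is that $k$ need not be infinite or algebraically closed, so Lemma~\ref{lem1} (which needs an infinite field) cannot be used directly; instead the geometric-integrality route through $\bar L$ as in Lemma~\ref{prime1} and the generic-fibre argument of Lemma~\ref{lem3}(ii) must be used, and one should check that all the hypersurfaces involved remain geometrically integral over the relevant base fields.
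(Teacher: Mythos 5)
Your proposal correctly identifies the toolkit (Theorem \ref{MDH} plus Lemmas \ref{prime1} and \ref{lem3}), but the architecture you describe would not go through, for two reasons. First, your step (1) hopes to choose a grading whose leading form is a degeneration such as $X^mY+Z^{p^e}$ or $X^mY+T^s$; but both of these rings admit obvious non-trivial exponential maps fixing $y$ (translate the variable that has disappeared from the relation, e.g.\ $t\mapsto t+U$ on $k[X,Y,Z,T]/(X^mY+Z^{p^e})$), so no contradiction can be extracted from such a degeneration. The only useful gradings are those making all three monomials of the relation homogeneous of the same weight, in which case $\gr(B)\cong B$ and the ring does not change at all; the actual purpose of the filtration in the paper's proof is not to simplify the relation but to replace invariant elements by their homogeneous leading forms. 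The paper does this twice: the first grading ($\w(x)=-1$, $\w(y)=m$, $\w(z)=\w(t)=0$), combined with $\td_k B^{\phi}=2$ and factorial closedness of the ring of invariants, produces an invariant $h(z,t)\in k[z,t]\setminus k$ alongside $y$; the second grading ($\w(x)=0$, $\w(y)=qp^{e}$, $\w(z)=q$, $\w(t)=p^{e-r}$) forces $h$ to become a product of the primes $z$, $t$, $z^{p^{e-r}}+\mu t^{q}$, one of which, $w$, is then itself invariant. None of this mechanism appears in your outline.

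Second, you propose to apply Lemma \ref{lem3}(ii) essentially to $\gr(B)$, but $\gr(B)$ is three-dimensional while Lemma \ref{lem3} concerns the surface $D=B/fB$; you are missing the dimension-reduction step. The paper first reduces WLOG to $k$ algebraically closed (which also disposes of your worry that Lemma \ref{lem1} is unavailable over finite fields), then uses Lemma \ref{lem1} to descend the exponential map to $B/(w-\beta)$ for a suitable $\beta\in k^{*}$, and only then invokes Lemma \ref{lem3}(ii) with a maximal ideal $M$ containing both $w-\beta$ and $g_{0}=z^{p^{e-r}}+t^{q}$, using $g=g_{0}^{p^{r}}\in M^{2}$ (this is where $r\ge 1$ enters). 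The exceptional case $w=z^{p^{e-r}}+t^{q}$, where $w-\beta$ and $g_{0}$ are comaximal, is handled separately by localizing at $k[x,y,w]$ and applying Lemma \ref{prime1} (this is where $q>1$ and $p\nmid q$ enter). Without the intermediate invariants $h$ and $w$ and without the cut down to a surface, the contradiction you aim for in step (4) cannot be reached.
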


\begin{proof}
Without loss of generality we may assume that $k$ is algebraically closed.
We first note that $B$ has the structure of a $\bZ$-graded algebra over $k$
with the following weights on the generators:
$$
\w (x) = -1, ~~ \w (y) = m, ~~ \w (z)=0, ~~ \w (t)=0.
$$
This grading defines a proper $\bZ$-filtration on $B$ such that 
$\gr (B)$ is isomorphic to $B$ (cf. Remark \ref{grmap} (3)). 
We identify $B$ with $\gr(B)$.
For each $g \in B$, let $\hat{g}$ denote 
the image of $g$ under the composite map $B \to \gr(B) \cong B$.
By Theorem \ref{MDH}, $\phi$ induces a non-trivial
exponential map $\hat{\phi}$ on $B (\cong \gr (B))$ 
such that $\hat{g} \in B^{\hat{\phi}}$ whenever $g \in B^{\phi}$.
Using the relation $x^my= -(z^{p^e} + t^{s})$ if necessary, 
we observe that each element $g \in B$
can be uniquely written as 
\begin{equation}\label{eq1}
 g = \sum_{n \ge 0} g_n( z, t ) x^n + \sum_{j >0,~0 \le i < m} g_{ij}(z, t) x^i y^j,
\end{equation}
where $g_n(z,t)$, $g_{ij}(z,t) \in k[z,t]$.
From the above expression and the weights defined on the generators, it follows that
each homogeneous element of $B$ is of the form $x^iy^jg_1(z, t)$, where $g_1(z, t) \in k[z, t]$.

We now prove the result by contradiction. 
Suppose, if possible, that $y \in B^{\phi}$.
Then $y (=\hat{y}) \in B^{\hat{\phi}}$.  


We first see that $h(z,t) \in B^{\hat{\phi}}$ for some 
polynomial $h(z,t) \in k[z,t] \setminus k$. 
Since $\td_{k} (B^{{\phi}}) =2$, one can see that there exists 
$g \in B^{{\phi}}$ such that $\hat{g} \notin k[y]$. 
Since $\hat{g}$ is homogeneous, 
we have $\hat{g} = x^iy^jg_1(z, t)$ for some $g_1(z, t) \in k[z, t]$.
Then either $g_1(z,t) \notin k^*$ or $x \mid \hat{g}$. 
If $g_1(z,t) \notin k^*$, then we set $h(z,t):= g_1(z,t)$ 
and if $g_1(z,t) \in k^*$ then we set $h(z,t):= z^{p^e} + t^{s}=-x^my$. 
In either case, since $\hat{g} \in B^{\hat{\phi}}$ by Theorem \ref{MDH},
we have $h(z,t) \in  B^{\hat{\phi}}$ by Lemma \ref{exp3} (i).
Thus $k[y, h(z,t)] \subseteq B^{\hat{\phi}}$ for some $h(z,t) \in k[z,t] \setminus k$.

Now consider another $\bZ$-graded structure on the $k$-algebra $B$
with the following weights assigned to the generators of $B$:
$$
\w (x) = 0, ~~ \w (y) = qp^{e}, ~~ \w (z)=q, ~~ \w (t)=p^{e-r}.
$$
This grading defines another proper $\bZ$-filtration on $B$ such that 
$\gr (B)$ is isomorphic to $B$. We identify $B$ with $\gr(B)$ as before.
For each $g \in B$, let $\overline{g}$ denote 
the image of $g$ under the composite map $B \to \gr(B) \cong B$.
Again by Theorem \ref{MDH}, $\hat{\phi}$ induces a non-trivial 
exponential map $\bar{\phi}$ on $B (\cong{\rm gr}(B))$ 
such that $k[{y}, \overline{h(z,t)}] \subseteq B^{\bar{\phi}}$.
Since $k$ is an algebraically closed field, we have 
$$
\overline{h(z,t)} = \theta{z}^i {t}^j \prod_{\ell \in \Lambda} ( {z}^{p^{e-r}} +\mu_{\ell} {t}^{q}),
$$ 
for some $\theta, \mu_{\ell} \in k^*$. 
Since $\overline{h(z,t)} \notin k$, 
it follows that there exists a prime factor $w$ of $\overline{h(z, t)}$ in $k[z,t]$. 
By above, we may assume that either $w= z$ or $w= t$ or $w = z^{p^{e-r}} + \mu t^q$
for some $\mu \in k^*$. By Lemma \ref{exp3} (i), $k[y, w] \subseteq B^{\bar{\phi}}$. 

We first show that $w \neq {z}^{p^{e-r}} + {t}^{q}$.
If $w= {z}^{p^{e-r}} + {t}^{q}$, then ${x}^m{y}=-({z}^{p^{e-r}} + {t}^{q})^{p^{r}}
= - w^{p^r} \in B^{\bar{\phi}}$
which implies that $x \in B^{\bar{\phi}}$ (cf. Lemma \ref{exp3} (i)). 
Let $L$ be the field of fractions of 
$C:= k[{x}, {y}, w](\hookrightarrow B^{\bar{\phi}})$.
By Lemma \ref{exp3} (vi), 
${\bar{\phi}}$ induces a non-trivial exponential map on 
$$
B\otimes_C L = k({x}, {y}, w)[{z}, {t}]
\cong \dfrac{L[Z,T]}{(Z^{p^{e-r}} + T^{q}-w)},
$$
which contradicts Lemma \ref{prime1}. 

So, we assume that either $w= z$ or $w= t$ or $w = z^{p^{e-r}} + \mu t^q$
for some $\mu \in k^*$ with $\mu \neq 1$.
Note that, for every $\lambda \in k^*$,  $w-\lambda$ is a prime element of $k[z, t]$ 
and $({z}^{p^e} + {t}^{qp^{r}}) \notin (w-\lambda)k[z, t]$ and 
hence, $(w-\lambda)$ is a prime element of $B$ (cf. Lemma \ref{lem3} (i)). 
Thus,  by Lemma \ref{lem1}, there exists a $\beta \in k^*$ such 
that $\bar{\phi}$ induces a non-trivial exponential map $\phi_1$ on $B/(w-\beta)$ with
the image of $y$ in $B/(w-\beta)$ lying in the ring of invariants of $\phi_1$.  
Set $f:= w-\beta$, $g_0:= z^{p^{e-r}} + t^q$ and
$g := {g_0}^{p^r} = {z}^{p^e} + {t}^{qp^{r}}$. 
Since $\mu \neq 1$, $f$ and $g_0$ are not comaximal in $k[z, t]$.
Let $M$ be a maximal ideal of $k[z, t]$ containing $f$ and $g_0$.   
Then $g = {g_0}^{p^r} \in M^2$ since $r\ge 1$.
This contradicts Lemma \ref{lem3} (ii).

Hence the result.
\end{proof}

\begin{prop}\label{p1}
Let $k$ be any field of characteristic $p$ ($> 0$) and
$$
A= k[X,Y,Z,T]/(X^m Y + Z^{p^e} + T+ T^{sp}),
$$
where $m, e, s$ are positive integers such that $p^e \nmid sp$, $sp \nmid p^e$ and $m >1$.
Let $\phi$ be a non-trivial exponential map on $A$.
Then $A^{\phi} \subseteq k[x, z, t]$, where $x, z, t, y$  denote the images of $X, Z, T, Y$ in $A$.
In particular, ${\rm DK} (A) \subsetneqq A$. 
\end{prop}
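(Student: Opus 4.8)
The plan is to deduce this from Lemma \ref{l1} by two successive associated-graded (degeneration) arguments. First I would observe that the arithmetic hypotheses put the exponent $sp$ in exactly the shape Lemma \ref{l1} requires: writing $sp=p^{r}q$ with $p\nmid q$ one has $r\ge 1$; since $v_{p}(sp)=r$, the hypothesis $p^{e}\nmid sp$ gives $e>r$, and if $q$ were $1$ then $sp=p^{r}\mid p^{e}$, contradicting $sp\nmid p^{e}$, so $q>1$. Hence, with $sp$ in the role of ``$s$'', Lemma \ref{l1} applies to $B:=k[X,Y,Z,T]/(X^{m}Y+T^{sp}+Z^{p^{e}})$: there is no non-trivial exponential map on $B$ fixing the image of $Y$.

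Next, assume for contradiction that $\phi$ is a non-trivial exponential map on $A$ with $A^{\phi}\not\subseteq k[x,z,t]$, and pick $g\in A^{\phi}\setminus k[x,z,t]$. Using $x^{m}y=-(z^{p^{e}}+t+t^{sp})$, every element of $A$ is uniquely a sum $\sum_{n\ge 0}g_{n}(z,t)x^{n}+\sum_{j>0,\,0\le i<m}g_{ij}(z,t)x^{i}y^{j}$ with $g_{n},g_{ij}\in k[z,t]$ (the analogue of \eqref{eq1}), and $k[x,z,t]$ is precisely the set of elements with all $g_{ij}=0$. Now impose on $A$ the grading $\w(x)=-1,\ \w(y)=m,\ \w(z)=\w(t)=0$; the defining polynomial is homogeneous of weight $0$, so $\gr(A)\cong A$ (Remark \ref{grmap}(3)), and by Theorem \ref{MDH} $\phi$ induces a non-trivial exponential map $\widehat{\phi}$ on $A$ with $\rho(A^{\phi})\subseteq A^{\widehat{\phi}}$. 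A short computation with the normal form shows that, for each $w>0$, the pair $(i,j)$ with $0\le i<m$, $j\ge 1$, $-i+mj=w$ is unique, so every homogeneous element of positive weight has the form $x^{i}y^{j}h(z,t)$ with $j\ge 1$ and $h\in k[z,t]$; since $g\notin k[x,z,t]$ its top weight is positive, so $\rho(g)=x^{i_{0}}y^{j_{0}}h(z,t)$ with $j_{0}\ge 1$ and $0\ne h\in k[z,t]$. As $\rho(g)\in A^{\widehat{\phi}}$ and $y\mid\rho(g)$, factorial closedness (Lemma \ref{exp3}(i)) gives $y\in A^{\widehat{\phi}}$.

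It therefore suffices to show that $A$ admits no non-trivial exponential map fixing $y$; rename it $\phi$. Now I would degenerate $A$ onto $B$ by imposing on $A$ the weights $\w(x)=0,\ \w(z)=q,\ \w(t)=p^{e-r},\ \w(y)=qp^{e}$. Then $X^{m}Y$, $Z^{p^{e}}$ and $T^{sp}$ all have weight $qp^{e}$, while $T$ has the strictly smaller weight $p^{e-r}$, so the leading form of $X^{m}Y+Z^{p^{e}}+T+T^{sp}$ is the defining polynomial of $B$; since $k[X,Y,Z,T]$ is a graded domain, leading forms multiply, so the initial ideal of the principal ideal equals the ideal generated by that leading form, and hence the associated graded ring of $A$ for the resulting (proper, admissible) filtration is isomorphic to $B$. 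By Theorem \ref{MDH}, $\phi$ induces a non-trivial exponential map on $B$; since $y$ is the image of a single monomial in this filtration, $\rho(y)=y$ lies in the ring of invariants of that map, contradicting Lemma \ref{l1}. Thus $A^{\phi}\subseteq k[x,z,t]$ for every non-trivial $\phi$, whence ${\rm DK}(A)\subseteq k[x,z,t]$; and $y\notin k[x,z,t]$ (clear from the normal form), so ${\rm DK}(A)\subsetneqq A$.

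The step I expect to be the main obstacle is the combinatorial part of the first degeneration — verifying that the leading form of an invariant lying outside $k[x,z,t]$ is a single monomial divisible by $y$ — together with justifying cleanly that $\gr(A)\cong B$ for the second filtration, i.e. that adjoining the lower-order term $T$ to the defining equation does not perturb the associated graded ring. The remaining ingredients are direct invocations of Lemma \ref{exp3}, Theorem \ref{MDH} and Lemma \ref{l1}.
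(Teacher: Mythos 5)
Your proposal is correct and follows essentially the same two-step degeneration as the paper: the first grading ($\w(x)=-1$, $\w(y)=m$, $\w(z)=\w(t)=0$) forces $y$ into the invariants via the normal form and factorial closedness, and the second grading degenerates $A$ onto $B=k[X,Y,Z,T]/(X^mY+Z^{p^e}+T^{sp})$ so that Lemma \ref{l1} applies. The only differences are cosmetic: your $r$ is the paper's $r+1$ (you take the valuation of $sp$ rather than of $s$), and you justify $\gr(A)\cong B$ by the initial-form argument for a principal ideal in a graded domain, where the paper instead checks admissibility, algebraic independence of $\bar{x},\bar{z},\bar{t}$, and the relation $\bar{x}^m\bar{y}+\bar{z}^{p^e}+\bar{t}^{sp}=0$ — both are valid.
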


\begin{proof}
We first consider the $\bZ$-graded structure of $A$ with weights:
$$
\w (x) = -1, ~~ \w (y) = m, ~~ \w (z)=0, ~~ \w (t)=0.
$$
For each $g \in A$, let $\hat{g}$ denote 
the homogeneous component of $g$ in $A$ of maximum degree.
As in the proof of Lemma \ref{l1}, we see that
$\phi$ induces a non-trivial exponential map $\hat{\phi}$ on 
$A (\cong \gr (A))$ such that $\hat{g} \in A^{\hat{\phi}}$ whenever $g \in A^{\phi}$.
Using the relation $x^my= -(z^{p^e} +t + t^{sp})$ if necessary, 
we observe that each element $g \in A$
can be uniquely written as 
\begin{equation}\label{eq2}
 g = \sum_{n \ge 0} g_n( z, t ) x^n + \sum_{j >0,~0 \le i < m} g_{ij}(z, t) x^i y^j,
\end{equation}
where $g_n(z,t)$, $g_{ij}(z,t) \in k[z,t]$.

We now prove the result by contradiction. 
Suppose, if possible, that $A^{\phi} \nsubseteq k[x, z, t]$. 
Let $f \in A^{\phi} \setminus k[x, z, t]$. 
From the expression (\ref{eq2}) and the weights defined on the generators of $A$,
we have $\hat{f} = x^a y^b f_1(z,t)$  $(\in A^{\hat{\phi}})$ 
for some $0 \le a < m$, $b>0$ and $f_1(z,t) \in k[z,t]$. 
Since $A^{\hat{\phi}}$ is factorially closed in $A$ 
(cf. Lemma \ref{exp3} (i)), it follows that $y \in A^{\hat{\phi}}$.

Write the integer $s$ as $qp^r$, where $p \nmid q$. 
Since $p^e \nmid sp$, we have $e-r-1>0$
and since $sp \nmid p^e$, we have $q >1$.
We note that $k[x,x^{-1}, z, t]$ has the structure of a $\bZ$-graded algebra over 
$k$, say $k[x,x^{-1}, z, t] = \bigoplus_{i \in \bZ}C_i$, 
with the following weights on the generators
$$
\w (x) = 0, ~~ \w (z)=q, ~~ \w (t)=p^{e-r-1}.
$$
Consider the proper $\bZ$-filtration $\{A_n\}_{n \in \bZ}$ on $A$ defined by 
$A_n := A \cap \bigoplus_{i \le n}C_i$.  
Let $B$ denote the graded ring $\gr (A)(:= \bigoplus_{n \in \bZ}A_{n}/ A_{n-1})$
with respect to the above filtration. 
We now show that 
\begin{equation}\label{isom}
B \cong k[X,Y,Z,T]/(X^mY + Z^{p^e} + T^{sp}). 
\end{equation}
For $g \in A$, let $\bar{g}$ denote the image of $g$ in $B$.
From the unique expression of any element $g \in A$ as given in (\ref{eq2}), 
it can be seen that the filtration defined on $A$ is admissible with the
generating set $\Gamma := \{x, y, z, t\}$. Hence
$B$ is generated by $\bar{x}$, $\bar{y}$, $\bar{z}$ and $\bar{t}$ 
(cf. Remark \ref{grmap} (2)). 

As $B$ can be identified with a subring of 
$\gr (k[x,x^{-1}, z,t]) \cong k[x,x^{-1}, z,t]$, we see that 
the elements $\bar{x}$, $\bar{z}$ and $\bar{t}$ of $B$
are algebraically independent over $k$.

Set $\ell_1 := qp^e$ and $\ell_2 := p^{e-r-1}$. Then $\ell_2 < \ell_1$; and
$x^my$, $z^{p^e}$ and $t^{sp}$ belong to $A_{\ell_1}\setminus A_{\ell_1-1}$ 
and $ t \in A_{\ell_2}\setminus A_{\ell_2-1}$.
Since $x^my + z^{p^e} + t^{sp} = -t \in A_{\ell_2}\subseteq A_{\ell_1-1}$, we have 
${\bar{x}^m\bar{y} + \bar{z}^{p^e} + \bar{t}^{sp}}= 0$ in $B$ (cf. Remark \ref{grmap} (1)). 

Now as $k[X,Y,Z,T]/(X^mY + Z^{p^e} + T^{sp})$ is an integral domain, 
the isomorphism in (\ref{isom}) holds. 
Therefore, by Theorem \ref{MDH}, $\hat{\phi}$ induces a non-trivial 
exponential map $\bar{\phi}$ on $B$ 
and $\bar{y}\in B^{\bar{\phi}}$, a contradiction by Lemma \ref{l1}.
Hence the result.
\end{proof}

\begin{rem}
{\em Lemma \ref{l1} and Proposition \ref{p1} do not hold for the case $m =1$, i.e., if 
$A= k[X,Y,Z,T]/(XY+ Z^{p^e}+T+ T^{sp})$. 
In fact, for any $f(y, z) \in k[y,z]$, setting
\[
\phi_1(y) =y, ~~ \phi_1(z) = z, ~~ \phi_1(t) =t+yf(y,z)U ~~ \text{and}~~
\phi_1(x) = x- f(y,z)U - ((t+yf(y,z)U)^{sp}- t^{sp})/y,
\]
we have a non-trivial exponential map $\phi_1$ on $A$ such that $y, z \in A^{\phi_1}$.
Similarly, for any $g(y, t) \in k[y,t]$, setting
\[
\phi_2(y) =y, ~~ \phi_2(t) = t, ~~ \phi_2(z) =z+yg(y,t)U ~~ \text{and}~~
\phi_2(x) = x- y^{p^e-1}g(y,t)^{p^{e}}U^{P^e},
\]
we get a non-trivial exponential map $\phi_2$ on $A$ such that $y, t \in A^{\phi_2}$.
Now interchanging $x$ and $y$ in $\phi_1$ and $\phi_2$, it is easy to see that 
there exist non-trivial exponential maps such that $x$ lies in their ring of invariants.
Hence ${\rm DK} (A) = A$.
}
\end{rem}

\begin{thm}\label{ce}
Let $k$ be any field of characteristic $p (> 0)$ and
$$
A= k[X,Y,Z,T]/(X^m Y + Z^{p^e} + T+ T^{sp}),
$$
where $m, e, s$ are positive integers such that $p^e \nmid sp$, 
$sp \nmid p^e$ and $m >1$.  Then $A \ncong_k k^{[3]}$.
\end{thm}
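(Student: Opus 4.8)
The plan is to combine Proposition \ref{p1} with Lemma \ref{r1}. First I would record the obvious hypothesis check: in the statement of Theorem \ref{ce} we are given $m>1$ and the numerical conditions $p^e\nmid sp$, $sp\nmid p^e$, which are exactly the hypotheses under which Proposition \ref{p1} applies. So Proposition \ref{p1} tells us that for \emph{every} non-trivial exponential map $\phi$ on $A$ we have $A^{\phi}\subseteq k[x,z,t]$, and consequently ${\rm DK}(A)$, being the subring of $A$ generated by all such $A^{\phi}$, satisfies ${\rm DK}(A)\subseteq k[x,z,t]\subsetneqq A$ (the inclusion is proper because $y\notin k[x,z,t]$, which follows from the unique-expression form \eqref{eq2} of elements of $A$).

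Next I would verify the transcendence-degree hypothesis of Lemma \ref{r1}, namely $\td_k A>1$. Since $A=k[X,Y,Z,T]/(X^mY+Z^{p^e}+T+T^{sp})$ is a hypersurface in $\A^4_k$ defined by a single nonzero non-unit polynomial, it is a $3$-dimensional affine domain, so $\td_k A=3>1$. (Alternatively, $x,z,t$ are algebraically independent over $k$ in $A$.) Having checked both hypotheses of Lemma \ref{r1} — $\td_k A>1$ and ${\rm DK}(A)\subsetneqq A$ — I would invoke that lemma to conclude that $A$ is not a polynomial ring over $k$; in particular $A\ncong_k k^{[3]}$, which is the assertion of Theorem \ref{ce}.

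I do not expect any real obstacle here: essentially all the work has already been done in Proposition \ref{p1}, and Theorem \ref{ce} is a short deduction. The only point requiring a sentence of care is the strictness of the inclusion ${\rm DK}(A)\subseteq k[x,z,t]$, i.e.\ that $k[x,z,t]\ne A$; this is immediate from the normal form \eqref{eq2}, since $y$ cannot be written as an element of $k[x,z,t]$. Everything else is bookkeeping: match the numerical hypotheses to those of Proposition \ref{p1}, note $\td_k A=3$, and apply Lemma \ref{r1}.

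\begin{proof}
Since $m>1$ and the integers $m,e,s$ satisfy $p^e\nmid sp$ and $sp\nmid p^e$, the hypotheses of Proposition \ref{p1} are fulfilled. Hence for every non-trivial exponential map $\phi$ on $A$ we have $A^{\phi}\subseteq k[x,z,t]$, and therefore
$$
{\rm DK}(A)\subseteq k[x,z,t].
$$
From the unique expression \eqref{eq2} of elements of $A$ it is clear that $y\notin k[x,z,t]$, so the inclusion is strict: ${\rm DK}(A)\subsetneqq A$. Moreover $A$ is a hypersurface domain of dimension $3$, so $\td_k A=3>1$. Applying Lemma \ref{r1}, we conclude that $A$ is not a polynomial ring over $k$; in particular $A\ncong_k k^{[3]}$.
\end{proof}
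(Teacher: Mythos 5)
Your proposal is correct and is exactly the paper's argument: the paper deduces Theorem \ref{ce} in one line from Lemma \ref{r1} and Proposition \ref{p1} (whose statement already includes the strict inclusion ${\rm DK}(A)\subsetneqq A$). Your additional checks of $\td_k A>1$ and of $y\notin k[x,z,t]$ are just the routine verifications the paper leaves implicit.
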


\begin{proof}
The result follows from Lemma \ref{r1} and Proposition \ref{p1}. 
\end{proof}

\begin{cor}\label{cor}
The Cancellation Conjecture does not hold for the polynomial ring 
$k[X,Y,Z]$, when $k$ is a field of positive characteristic. 
\end{cor}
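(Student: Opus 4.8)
The plan is to read off the corollary by combining part~(1) of Theorem~\ref{Ae}, due to Asanuma, with Theorem~\ref{ce}: the first produces a $k$-algebra $A$ whose coordinate ring becomes a polynomial ring after adjoining one variable, while the second guarantees that this same $A$ is \emph{not} itself a polynomial ring over $k$. Since Theorem~\ref{ce} carries the extra hypothesis $m>1$ (which does not appear in Theorem~\ref{Ae}), the only genuine point to verify is that the numerical constraints of the two statements can be met simultaneously.

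First I would fix $p=\operatorname{char} k>0$ and choose positive integers $m,e,s$ with $m>1$, $p^e\nmid sp$ and $sp\nmid p^e$. Writing $s=qp^r$ with $p\nmid q$, these divisibility conditions are equivalent to $e\ge r+2$ and $q>1$, so they are easily satisfied; a convenient choice is $m=2$, $e=2$, and $s$ any integer $>1$ coprime to $p$. With such parameters, set
$$A=k[X,Y,Z,T]/(X^mY+Z^{p^e}+T+T^{sp}).$$
Theorem~\ref{Ae}(1) then applies, since $p^e\nmid sp$ and $sp\nmid p^e$, and yields $A^{[1]}\cong_k k^{[4]}$; in particular $A$ is an affine $k$-domain with $\td_k A=3$. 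Since moreover $m>1$, Theorem~\ref{ce} applies and gives $A\ncong_k k^{[3]}$. Hence $A$ is a $k$-algebra with $A^{[1]}\cong k[X,Y,Z]^{[1]}$ but $A\ncong k[X,Y,Z]$, which is precisely the failure of the Cancellation Conjecture for $k[X,Y,Z]$ over an arbitrary field $k$ of positive characteristic.

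At this stage there is no real obstacle left: all the substance has been placed in Theorem~\ref{Ae} and in Theorem~\ref{ce} (the latter being the new contribution, established via Lemma~\ref{r1} and Proposition~\ref{p1}). The only thing that deserves a moment's attention is the exponent bookkeeping of the previous paragraph --- confirming that the requirement $m>1$ is compatible with $p^e\nmid sp$ and $sp\nmid p^e$ --- together with the observation that both Theorem~\ref{Ae} and Theorem~\ref{ce} are already stated for an arbitrary field of characteristic $p$, so that the corollary requires no separate descent from the algebraically closed case.
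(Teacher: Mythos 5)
Your proposal is correct and follows exactly the paper's route: Corollary~\ref{cor} is obtained by combining Theorem~\ref{Ae}(1) with Theorem~\ref{ce}, and your verification that $m>1$ is compatible with $p^e\nmid sp$ and $sp\nmid p^e$ (equivalently $e\ge r+2$ and $q>1$) is the same bookkeeping the paper performs inside Proposition~\ref{p1}. The paper's own proof is just the one-line citation of these two theorems, so your write-up is simply a more explicit version of the same argument.
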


\begin{proof}
 Follows from Theorem \ref{Ae} and Theorem \ref{ce}.
\end{proof}

\medskip

\noindent
{\bf Acknowledgement:} The author thanks Professors Shrikant M. Bhatwadekar, Amartya K. Dutta 
and Nobuharu Onoda for carefully going through the earlier draft and 
suggesting improvements.

\end{document}